\documentclass{elsarticle}

\usepackage{amsthm}
\usepackage{amsmath}
\usepackage{amssymb}
\usepackage{graphicx}
\usepackage{amscd}

\theoremstyle{plain}
\newtheorem{thm}{Theorem}[section]
\newtheorem{lem}[thm]{Lemma}
\newtheorem{prop}[thm]{Proposition}
\newtheorem{cor}[thm]{Corollary}

\theoremstyle{definition}

\theoremstyle{remark}
\newtheorem*{rem}{Remark}

\begin{document}

\title{A Converse to a Theorem on Normal Forms of Volume Forms with Respect to a Hypersurface}
\author{Konstantinos Kourliouros}
\address{Imperial College London, Department of Mathematics,  Huxley Building 180 Queen's Gate,\\
South Kensington Campus, London SW7, United Kingdom}
\ead{k.kourliouros10@imperial.ac.uk}

\begin{abstract}
In this note we give a positive answer to a question asked by Y. Colin de Verdi\`ere concerning the converse of the following theorem, due to A. N. Varchenko: two germs of volume forms are equivalent with respect to diffeomorphisms preserving a germ of an isolated hypersurface singularity, if their difference is the differential of a form whose restriction on the smooth part of the hypersurface is exact.  

\vspace{0.3cm}

\end{abstract}

\begin{keyword}
Isolated Singularities \sep De Rham Cohomology \sep Volume Forms \sep Normal Forms
\end{keyword}

\maketitle

\section{Introduction-Main Results}

In this paper we will give a positive answer to a question asked by Y. Colin de Verdi\`ere in \cite{C1} which was formulated as follows: suppose that two germs of symplectic forms at the origin of the plane are equivalent with respect to a diffeomorphism preserving a plane curve germ with an isolated singularity at the origin. Is it true that their difference is the differential of a 1-form whose restriction on the smooth part of the curve is exact? This question asks for the validity of the converse to a general normal form theorem in Lagrangian singularity theory  according to which: two germs of symplectic structures are equivalent with respect to diffeomorphisms preserving a Lagrangian variety if their difference is the differential of a 1-form whose restriction on the smooth part of the variety is exact. The proof of this theorem can be easily deduced from the reasoning in A. B. Givental's paper \cite{Gi} using Moser's homotopy method. It holds in any dimension and for arbitrary Lagrangian singularities. It's converse though is not so easy to deduce; as it turns out, the main difficulty comes from the fact that the singularities of Lagrangian varieties in dimension higher than two are non-isolated (c.f. \cite{Gi}, \cite{Sev}) and their cohomology can be rather complicated. On the other hand, for the 2-dimensional case (where the Lagrangian singularities are indeed isolated) the normal form theorem stated above can be viewed as a special case of a general theorem obtained by A. N. Varchenko in \cite{Var} concerning the normal forms of germs of (powers of) volume forms with respect to an isolated hypersurface singularity. Here we will prove a converse to Varchenko's normal form theorem, which trivially answers Verdi\`ere's question, and it can be formulated as follows:
\begin{thm}
\label{t}
Suppose that two germs of volume forms are equivalent with respect to a diffeomorphism preserving a germ of an isolated hypersurface singularity. Then their difference is the differential of a form whose restriction on the smooth part of the hypersurface is exact.
\end{thm}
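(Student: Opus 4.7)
The plan is to run the Moser homotopy method in reverse. Write $V=\{f=0\}$ for the given isolated hypersurface, with smooth locus $V^{reg}$, and suppose $\phi^{*}\omega_{1}=\omega_{0}$ with $\phi$ a germ of diffeomorphism satisfying $\phi(V)=V$. First I would realize $\phi$ as the time-$1$ map of a smooth isotopy $\phi_{t}$, $t\in[0,1]$, through germs of $V$-preserving diffeomorphisms with $\phi_{0}=\mathrm{id}$, and let $X_{t}:=\dot\phi_{t}\circ\phi_{t}^{-1}$ denote its time-dependent infinitesimal generator. Because each $\phi_{t}$ preserves $V$, the field $X_{t}$ is $f$-logarithmic and, at smooth points of $V$, tangent to $V^{reg}$.

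Setting $\omega_{t}:=\phi_{t}^{*}\omega_{1}$, so that $\omega_{t}$ interpolates from $\omega_{1}$ at $t=0$ to $\omega_{0}$ at $t=1$, Cartan's formula together with the vanishing of $d\omega_{1}$ (top degree) yields
\begin{equation*}
\frac{d}{dt}\omega_{t}=\phi_{t}^{*}L_{X_{t}}\omega_{1}=\phi_{t}^{*}d(i_{X_{t}}\omega_{1})=d\bigl(\phi_{t}^{*}(i_{X_{t}}\omega_{1})\bigr).
\end{equation*}
Integrating over $t$ produces an explicit primitive $\eta:=-\int_{0}^{1}\phi_{t}^{*}(i_{X_{t}}\omega_{1})\,dt$ with $d\eta=\omega_{1}-\omega_{0}$. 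The key observation is that this $\eta$ restricts to zero on $V^{reg}$: at any $p\in V^{reg}$ and any tangent vectors $v_{1},\dots,v_{n-1}\in T_{p}V$, the $n$ vectors $X_{t}(p),v_{1},\dots,v_{n-1}$ all lie in the $(n-1)$-dimensional subspace $T_{p}V$ and are therefore linearly dependent, forcing $\omega_{1}(X_{t},v_{1},\dots,v_{n-1})_{p}=0$; since each $\phi_{t}$ sends $V^{reg}$ into itself, this vanishing is preserved under pullback, so $\eta|_{V^{reg}}=0$ is trivially exact, which is exactly the condition demanded by Varchenko's theorem.

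The main obstacle is the very first step: producing the isotopy $\phi_{t}$ from $\phi$ to the identity through $V$-preserving germs. The group of such germs need not be globally connected --- for instance when $V$ has several branches that $\phi$ permutes --- so additional input is required, and one expects this to be the place where the isolated nature of the hypersurface singularity is used in an essential way. A clean approach is to reduce to the identity component via the module of $f$-logarithmic vector fields (finitely generated for isolated hypersurface singularities, after K.\ Saito, and exponentiating to a neighbourhood of $\mathrm{id}$), and then check separately that any ``exotic'' component of the $V$-preserving diffeomorphism group acts trivially on the well-defined invariant $[\eta|_{V^{reg}}]\in H^{n-1}(V^{reg})$ attached to the difference $\omega_{1}-\omega_{0}$, so that the conclusion survives passage to the full group.
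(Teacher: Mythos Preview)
Your core mechanism---run the Moser homotopy and observe that the contraction of a volume form against a vector field tangent to $V^{reg}$ restricts to zero on $V^{reg}$---is exactly what the paper uses. The gap is the one you yourself flag: producing the isotopy $\phi_t$.

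The paper does not try to build an analytic (or even continuous) isotopy. Instead it interpolates $\Phi$ by a one-parameter family of \emph{formal} $V$-preserving diffeomorphisms, via a Sternberg-type induction on Taylor coefficients; this always succeeds once $\Phi$ is tangent to the identity, which is the hypothesis in the paper's precise restatement. Your Moser computation, carried out verbatim with this formal family, shows that the class $[\alpha]$ vanishes in the cohomology of the \emph{formal} Givental complex $H^n(\hat{\tilde{\Omega}}^{\bullet}_{X,0})$. The return to the analytic category is a separate comparison theorem (an analogue of Bloom--Brieskorn): the natural map $H^n(\tilde{\Omega}^{\bullet}_{X,0})\to H^n(\hat{\tilde{\Omega}}^{\bullet}_{X,0})$ is an isomorphism, proved either via resolution of singularities or via regularity of the Gauss--Manin connection on the Brieskorn lattice. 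That comparison, not the interpolation, is where the isolated-singularity hypothesis is actually consumed.

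Your proposed workaround---exponentiate $f$-logarithmic fields to cover a neighbourhood of the identity, then handle other components by a separate cohomological argument---does not close the gap. Exponentiating the Saito module produces individual flows, but gives no reason why a given $\Phi$ tangent to the identity is the time-$1$ map of any such flow; and the claim that ``exotic'' components act trivially on $[\eta|_{V^{reg}}]$ is exactly the statement you are trying to prove, so invoking it is circular. The formal-interpolation-plus-comparison strategy is precisely the device that replaces this missing step.
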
    

The method of proof is as follows: we first prove the theorem in the formal category. For this we use a formal interpolation lemma for the elements of the isotropy group of an isolated hypersurface singularity (Lemma \ref{l1}) which is a variant of the one presented by J. -P. Fran\c{c}oise in \cite{F} and relies in a general interpolation method obtained by S. Sternberg \cite{Ste}. Then we pass to the analytic category using a comparison theorem between the corresponding de Rham cohomologies in the formal and analytic categories (Lemma \ref{l2}). This is analogous to the well known Bloom-Brieskorn theorem \cite{B} for the de Rham cohomology of an analytic space with isolated singularities. But in contrast to the ordinary Bloom-Brieskorn theorem where the cohomology of the complex of K\"ahler differentials is considered, we need to consider instead the cohomology of the so called Givental complex, i.e. the complex of germs of holomorphic forms modulo those that vanish on the smooth part of the hypersurface (which naturally appears in the statements of the theorems above).

\section{De Rham Cohomology of an Isolated Hypersurface Singularity and an Analog of the Bloom-Brieskorn Theorem}

Let $f:(\mathbb{C}^{n+1},0)\rightarrow (\mathbb{C},0)$ be a germ of a holomorphic function with an isolated singularity at the origin and let $(X,0)=\{f=0\}$ be the corresponding hypersurface germ, zero level set of $f$ (we will suppose throughout that the germ $(X,0)$ is reduced). To the germ $(X,0)$ we may associate several complexes of holomorphic forms, quotients of the complex $\Omega^{\bullet}$ of germs of holomorphic forms at the origin of $\mathbb{C}^{n+1}$, the ``largest'' one being the so called complex of K\"ahler differentials:
\[\Omega^{\bullet}_{X,0}=\frac{\Omega^{\bullet}}{df\wedge \Omega^{\bullet-1}+f\Omega^{\bullet}},\]
where the differential is induced by the differential in $\Omega^{\bullet}$ after passing to quotients. The cohomologies of this complex are finite dimensional vector spaces and they have being computed by E. Brieskorn in \cite{B}. In particular, along with the results of M. Sebastiani \cite{S} it follows that:
\begin{equation}
\label{ck}
H^p(\Omega^{\bullet}_{X,0})=\left\{\begin{array}{cl}
							 \mathbb{C}, & p=0,\\
							 0, & 0<p<n, p>n \\
							 \mathbb{C}^{d}, & p=n,\\
							 \end{array} \right..
\end{equation}
The number $d$ can be interpreted as the degree of non-quasihomogeneity of the germ $f$, i.e.
\[d=\mu-\tau,\]
where $\mu$ is the Milnor number and $\tau$ is the Tjurina number of the singularity $f$:
\[\mu=\dim_{\mathbb{C}}\frac{\Omega^{n+1}}{df\wedge \Omega^n}, \hspace{0.3cm} \tau=\dim_{\mathbb{C}}\frac{\Omega^{n+1}}{df\wedge \Omega^n+f\Omega^{n+1}},\]
\[d=\dim_{\mathbb{C}}\frac{df\wedge \Omega^n+f\Omega^{n+1}}{df\wedge \Omega^n}.\]
Indeed, it is a result of K. Saito \cite{Sa} according to which $f$ is equivalent to a quasihomogeneous germ if and only if it belongs to its gradient ideal, i.e. $f\Omega^{n+1}\subset df\wedge \Omega^n$. 

Denote now by $X^*=X\setminus 0$ the smooth part of the hypersurface $X$. In \cite{Fe}, A. Ferrari introduced another important complex associated to $X$ which is the quotient complex of $\Omega^{\bullet}$ modulo the subcomplex $\Omega^{\bullet}(X^*)$ which consists of forms whose restriction on the smooth part $X^*$ of $X$ is identically zero:
\[\tilde{\Omega}^{\bullet}_{X,0}=\frac{\Omega^{\bullet}}{\Omega^{\bullet}(X^*)}.\]
This complex was also used extensively by A. B. Givental in \cite{Gi} and is called the Givental complex in \cite{He}. We adopt the same notation here as well.  As it is easy to see there is an identification of the complex of K\"ahler differentials with the Givental complex on the smooth part $X^*$ and thus there is a short exact sequence of complexes:
\begin{equation}
\label{ses0}
0\rightarrow T^{\bullet}_{X,0}\rightarrow \Omega^{\bullet}_{X,0}\rightarrow \tilde{\Omega}^{\bullet}_{X,0}\rightarrow 0,
\end{equation}
where $T^{\bullet}_{X,0}$ is the torsion subcomplex of $\Omega^{\bullet}_{X,0}$ (here is where we need $(X,0)$ to be reduced). Indeed any torsion element vanishes on the smooth part $X^*$ and thus the complex $T^{\bullet}_{X,0}$ is contained in the kernel of the natural projection $\Omega^{\bullet}_{X,0}\rightarrow \tilde{\Omega}^{\bullet}_{X,0}$. 

In \cite{G}, G. M. Greuel studied the relationship of the Givental and K\"ahler complexes in the general case where $(X,0)$ defines an $n$-dimensional isolated complete intersection singularity (embedded in some $\mathbb{C}^{m}$). He proves that:
\[T^{p}_{X,0}=0, \hspace{0.3cm}  p< n, \]
\[T^p_{X,0}=\Omega^{p}_{X,0}, \hspace{0.3cm} p>n,\]
and also:
\[H^p(\Omega^{\bullet}_{X,0})=0,\hspace{0.3cm} 0<p<n,\]
\[H^p(\tilde{\Omega}^{\bullet}_{X,0})=0, \hspace{0.3cm} p\neq 0,n.\]
Thus, in the particular case where $(X,0)$ is an isolated hypersurface singularity we obtain the following analog of the Brieskorn-Sebastiani result (\ref{ck}) for the cohomology of the Givental complex:
\begin{prop}
\label{p1}
\[H^p(\tilde{\Omega}^{\bullet}_{X,0})=\left\{\begin{array}{cl}
							 \mathbb{C}, & p=0,\\
							 0, & 0<p<n, p>n \\
							 \mathbb{C}^{d}, & p=n,\\
							 \end{array} \right.,\]
where $d=\mu-\tau$ is the degree of non-quasihomogeneity of the germ $f$.
\end{prop}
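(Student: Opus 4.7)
The proposition decomposes naturally into a vanishing assertion in degrees $\neq 0, n$ and an explicit dimension count in degrees $0$ and $n$. The vanishing $H^p(\tilde{\Omega}^{\bullet}_{X,0}) = 0$ for $0 < p < n$ is directly the Greuel result recalled above, while for $p > n$ it is immediate from $\tilde{\Omega}^{p}_{X,0} = 0$: a $p$-form with $p > \dim_{\mathbb{C}} X^* = n$ restricts trivially to zero on $X^*$, so $\Omega^{p}(X^*) = \Omega^{p}$ in that range.

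For $p = 0$, the identity $T^0_{X,0} = 0$ from Greuel identifies $\tilde{\Omega}^{0}_{X,0}$ with $\mathcal{O}_{X,0}$, and a $d$-closed element has restriction to $X^*$ locally constant. Since the reduced germ $(X,0)$ is connected (its finitely many irreducible components must share the unique singular point at $0$), such a holomorphic function is a global constant, giving $H^0(\tilde{\Omega}^{\bullet}_{X,0}) = \mathbb{C}$.

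The main case is $p = n$, for which I would apply the long exact sequence associated to (\ref{ses0}). Using $H^{n-1}(\tilde{\Omega}^{\bullet}_{X,0}) = 0$ (Greuel) and $H^{n+1}(\Omega^{\bullet}_{X,0}) = 0$ (Brieskorn-Sebastiani), together with the observation that $T^{\bullet}_{X,0}$ is concentrated in degrees $n$ and $n+1$ (since $T^{p}_{X,0} = 0$ for $p < n$ by Greuel and $\Omega^{p} = 0$ for $p > n+1$ by ambient dimension), the long exact sequence collapses to
\[0 \to H^n(T^{\bullet}_{X,0}) \to H^n(\Omega^{\bullet}_{X,0}) \to H^n(\tilde{\Omega}^{\bullet}_{X,0}) \to H^{n+1}(T^{\bullet}_{X,0}) \to 0.\]
Since $T^{n+1}_{X,0} = \Omega^{n+1}_{X,0}$ has dimension $\tau$, the Euler characteristic of this two-term complex, computed at the level of cohomology and at the level of terms, gives $\dim H^n(T^{\bullet}_{X,0}) - \dim H^{n+1}(T^{\bullet}_{X,0}) = \dim T^{n}_{X,0} - \tau$. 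The key input, and the main technical obstacle, is the classical identity $\dim T^{n}_{X,0} = \tau$ for an isolated hypersurface singularity, which can be extracted from local duality by identifying the torsion with the Matlis dual of the Tjurina module $\Omega^{n+1}_{X,0}$. Granting this equality, the alternating sum of dimensions in the exact sequence above forces $\dim H^n(\tilde{\Omega}^{\bullet}_{X,0}) = \dim H^n(\Omega^{\bullet}_{X,0}) = d$, completing the proof.
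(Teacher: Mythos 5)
Your proposal is correct in outline but takes a genuinely different route from the paper. The paper, like you, quotes Greuel for the vanishing outside degrees $0$ and $n$, but it computes $H^n(\tilde{\Omega}^{\bullet}_{X,0})$ by exhibiting the short exact sequence
\[0\rightarrow H^n(\tilde{\Omega}^{\bullet}_{X,0})\stackrel{df\wedge}{\rightarrow}\frac{H''_f}{fH''_f}\stackrel{\pi}{\rightarrow}\mathcal{Q}_{X,0}\rightarrow 0,\]
where $H''_f$ is the Brieskorn module: Sebastiani's freeness theorem gives $\dim_{\mathbb{C}}H''_f/fH''_f=\mu$, while $\dim_{\mathbb{C}}\mathcal{Q}_{X,0}=\tau$ by definition, so the count is immediate. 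You instead run the long exact sequence of the torsion sequence (\ref{ses0}) and reduce everything, via an Euler characteristic count on the two-term complex $T^n_{X,0}\rightarrow T^{n+1}_{X,0}$, to the identity $\dim_{\mathbb{C}}T^n_{X,0}=\tau$. That identity is true and classical (it is the theorem on torsion differentials of isolated hypersurface singularities, provable by local duality as you indicate, and extractable from Greuel's paper), but be aware that you have displaced essentially all of the content of the proposition into it: given the Brieskorn--Sebastiani computation (\ref{ck}) and Greuel's vanishing results, the statement $\dim_{\mathbb{C}}T^n_{X,0}=\tau$ is \emph{equivalent} to $H^n(\tilde{\Omega}^{\bullet}_{X,0})=\mathbb{C}^d$, so a precise reference with an independent proof is indispensable here, whereas the paper's external input (freeness of $H''_f$ over $\mathbb{C}\{f\}$) is explicitly cited in its bibliography. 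Two smaller points: (i) for $n=1$ your justification ``$H^{n-1}(\tilde{\Omega}^{\bullet}_{X,0})=0$'' fails, since $H^0(\tilde{\Omega}^{\bullet}_{X,0})=\mathbb{C}$; the four-term exact sequence survives because $H^0(\Omega^{\bullet}_{X,0})\rightarrow H^0(\tilde{\Omega}^{\bullet}_{X,0})$ is surjective (constants to constants), but since $n=1$ is exactly the case relevant to Colin de Verdi\`ere's question this deserves to be said. (ii) The paper's explicit sequence (\ref{ses1}) is reused in the Remark following Lemma \ref{l2} to give a resolution-free proof of the formal comparison in degree $n$; your argument, giving only dimensions, does not yield that extra dividend.
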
 
\begin{proof}
It suffices only to show the following equality (the zero cohomology is trivial):
\[H^n(\tilde{\Omega}^{\bullet}_{X,0})=\mathbb{C}^d.\]
This in turn has been proved by A. N. Varchenko in \cite{Var}. Here we will give an alternative, simple proof, which is distilled from \cite{C1}.  To the germ $f$ we associate the Brieskorn module as in \cite{B}:
\[H_f''=\frac{\Omega^{n+1}}{df\wedge d\Omega^{n-1}}.\]
According to the Sebastiani theorem \cite{S} this is a free module of rank $\mu$ over $\mathbb{C}\{f\}$ and thus the quotient 
\[\frac{H''_f}{fH''_f}=\frac{\Omega^{n+1}}{df\wedge d\Omega^{n-1}+f\Omega^{n+1}}\] 
is a $\mu$-dimensional $\mathbb{C}$-vector space. Denote now by 
\[\mathcal{Q}_{X,0}=\frac{\Omega^{n+1}}{df\wedge \Omega^n+f\Omega^{n+1}}\]
the space of deformations of the germ $(X,0)$. By the fact that $df\wedge d\Omega^{n-1}+f\Omega^{n+1}\subseteq df\wedge \Omega^n+f\Omega^{n+1}$ there is a natural projection:
\[\frac{H''_f}{fH''_f}\stackrel{\pi}{\rightarrow}\mathcal{Q}_{X,0},\]
whose kernel:
\[\ker{\pi}=\frac{df\wedge \Omega^n}{df\wedge d\Omega^{n-1}+f\Omega^{n+1}}\]
is a priori a $d=\mu-\tau$-dimensional vector space. Now, the $n$-th cohomology of the Givental complex is:
\[H^n(\tilde{\Omega}^{\bullet}_{X,0})=\frac{\tilde{\Omega}^n_{X,0}}{d\tilde{\Omega}^{n-1}_{X,0}}=\frac{\Omega^n}{\Omega^n(X^*)+d\Omega^{n-1}},\]
where:
\[\Omega^n(X^*)=\{\alpha \in \Omega^n/df\wedge \alpha \in f\Omega^{n+1}\}.\]
It follows from this that 
\[\ker{\pi}=df\wedge H^n(\tilde{\Omega}^{\bullet}_{X,0})\]
and thus there is a short exact sequence:
\begin{equation}
\label{ses1}
0\rightarrow H^n(\tilde{\Omega}^{\bullet}_{X,0})\stackrel{df\wedge}{\rightarrow}\frac{H''_f}{fH''_f}\stackrel{\pi}{\rightarrow}\mathcal{Q}_{X,0}\rightarrow 0.
\end{equation} 
This proves that indeed $H^n(\tilde{\Omega}^{\bullet}_{X,0})=\mathbb{C}^d$ as was asserted. 
\end{proof}

It follows from the proposition above along with (\ref{ck}) that there is an isomorphism of vector spaces:
\[H^{\bullet}(\Omega^{\bullet}_{X,0})\cong H^{\bullet}(\tilde{\Omega}^{\bullet}_{X,0}).\]
Thus we may formulate  the following version of the Poincar\'e lemma for the germ $(X,0)$:
\begin{cor}[c.f. \cite{Gi} for $n=1$]
\label{c1}
The germ $(X,0)$ is quasihomogeneous if and only if its Givental (or K\"ahler) complex is acyclic (except in zero degree).
\end{cor}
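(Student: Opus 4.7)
The plan is to read off the corollary directly from the cohomology computations already at our disposal, together with Saito's criterion for quasihomogeneity that is recalled just after equation (\ref{ck}). First I would observe that by Proposition \ref{p1} and the Brieskorn–Sebastiani formula (\ref{ck}), both $H^\bullet(\Omega^\bullet_{X,0})$ and $H^\bullet(\tilde{\Omega}^\bullet_{X,0})$ vanish in all positive degrees $p\ne n$, while in degree $n$ they are $\mathbb{C}^d$ with $d=\mu-\tau$. Hence acyclicity of the Kähler complex (equivalently of the Givental complex) outside degree zero is equivalent to the single numerical condition $d=0$.

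Next I would translate $d=0$ back into a condition on $f$. By the very definition given in the text,
\[
d=\dim_{\mathbb{C}}\frac{df\wedge \Omega^n+f\Omega^{n+1}}{df\wedge \Omega^n},
\]
so $d=0$ holds if and only if $f\Omega^{n+1}\subset df\wedge\Omega^n$, i.e.\ $f$ lies in its own gradient ideal. At this point I would invoke K.\ Saito's theorem (quoted after the definition of $d$ in the text): $f$ is equivalent, via a biholomorphic change of coordinates, to a quasihomogeneous germ precisely when $f\in(\partial f/\partial x_0,\dots,\partial f/\partial x_n)$. Combining the two equivalences gives exactly the statement of the corollary.

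I do not foresee a genuine obstacle here; the content is entirely bookkeeping, because the only nontrivial input — namely the identification of $H^n(\tilde{\Omega}^\bullet_{X,0})$ with $\mathbb{C}^d$ for $d=\mu-\tau$ — was already carried out in Proposition \ref{p1}. The only point worth being careful about is that ``quasihomogeneous'' in the statement should be understood up to analytic equivalence (which is Saito's formulation); with that convention in place, the chain of equivalences ``complex acyclic in positive degrees $\Longleftrightarrow$ $\mu=\tau$ $\Longleftrightarrow$ $f\in df\wedge\Omega^n$ $\Longleftrightarrow$ $f$ quasihomogeneous'' closes the argument.
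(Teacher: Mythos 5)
Your argument is correct and is essentially the one the paper intends: the corollary is read off from Proposition \ref{p1} together with (\ref{ck}), which reduce acyclicity to $d=\mu-\tau=0$, and then Saito's criterion identifies $d=0$ with $f$ lying in its gradient ideal, i.e.\ with quasihomogeneity up to analytic equivalence. Your added care about interpreting ``quasihomogeneous'' up to equivalence matches the paper's implicit convention.
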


Finally, we will need  the following analog of the Bloom-Brieskorn theorem \cite{B}, which is a comparison of the cohomologies of the analytic and formal Givental complexes. The proof we will give below is in fact a simple variant of the one presented in \cite{B}. Moreover, the fact that $(X,0)$ is an isolated hypersurface singularity plays no significant role; the same proof holds for any analytic space, as long as its singularities are isolated.
\begin{lem}
\label{l2}
Let $\hat{\tilde{\Omega}}^{\bullet}_{X,0}$ be the formal completion of the Givental complex. Then the natural inclusion $\tilde{\Omega}^{\bullet}_{X,0}\hookrightarrow \hat{\tilde{\Omega}}^{\bullet}_{X,0}$ induces an isomorphism of finite dimensional vector spaces:
\[H^{\bullet}(\tilde{\Omega}^{\bullet}_{X,0})\cong H^{\bullet}(\hat{\tilde{\Omega}}^{\bullet}_{X,0}).\]
\end{lem}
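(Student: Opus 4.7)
The plan is to adapt Brieskorn's comparison argument from \cite{B} (which establishes the analogous isomorphism for the K\"ahler complex) to the Givental complex. Only three general properties will be used: (i) each term of the complex is a finitely generated $\mathcal{O} := \mathcal{O}_{\mathbb{C}^{n+1},0}$-module; (ii) the analytic cohomology is finite-dimensional; and (iii) $\hat{\mathcal{O}}$ is faithfully flat over $\mathcal{O}$. Since all three hold for the Givental complex, the proof should proceed in direct analogy with the original.

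First I would verify that each term $\tilde{\Omega}^p_{X,0} = \Omega^p / \Omega^p(X^*)$ is a finitely generated $\mathcal{O}$-module. The submodule $\Omega^p(X^*) = \{\alpha \in \Omega^p : df \wedge \alpha \in f\Omega^{p+1}\}$ is the preimage of the coherent submodule $f\Omega^{p+1}$ under the $\mathcal{O}$-linear map $\alpha \mapsto df \wedge \alpha$, hence is coherent itself, and so is the quotient. Flatness of $\hat{\mathcal{O}}$ then gives that quotient formation commutes with completion, identifying
$$\hat{\tilde{\Omega}}^p_{X,0} = \tilde{\Omega}^p_{X,0} \otimes_{\mathcal{O}} \hat{\mathcal{O}},$$
that is, the formal term is the $\mathfrak{m}$-adic completion of the analytic one.

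Next I would examine the short exact sequence of complexes
$$0 \to \tilde{\Omega}^\bullet_{X,0} \to \hat{\tilde{\Omega}}^\bullet_{X,0} \to C^\bullet \to 0,$$
and reduce the statement of the lemma to proving that $C^\bullet$ is acyclic. Proposition \ref{p1} supplies the crucial input: each $H^p(\tilde{\Omega}^\bullet_{X,0})$ is a finite-dimensional $\mathbb{C}$-vector space, so it is annihilated by some power $\mathfrak{m}^N$ of the maximal ideal; any such $\mathcal{O}$-module is automatically $\mathfrak{m}$-adically complete, because $\mathcal{O}/\mathfrak{m}^N = \hat{\mathcal{O}}/\hat{\mathfrak{m}}^N$.

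The main obstacle, exactly as in the original Brieskorn argument, is that the de Rham differential is only $\mathbb{C}$-linear rather than $\mathcal{O}$-linear, so flatness of $\hat{\mathcal{O}}$ does not a priori commute with taking cocycles and coboundaries. The remedy is to introduce the $\mathfrak{m}$-adic (suitably shifted to account for the Leibniz rule) filtration on both complexes; the resulting spectral sequence has an associated graded on which the induced differential becomes compatible with the $\mathcal{O}$-module structure, and it degenerates after finitely many pages precisely because the abutment is finite-dimensional. Since this mechanism depends only on coherence of the terms and finite-dimensionality of the cohomology -- both verified above for the Givental complex -- Brieskorn's argument transfers \emph{mutatis mutandis} and yields the desired quasi-isomorphism.
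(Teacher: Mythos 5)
Your setup is fine as far as it goes: the terms $\tilde{\Omega}^p_{X,0}=\Omega^p/\Omega^p(X^*)$ are indeed coherent (your description of $\Omega^p(X^*)$ as the preimage of $f\Omega^{p+1}$ under $df\wedge(-)$ uses that $(X,0)$ is reduced, but it is correct), completion of the terms is exact, and you correctly isolate the difficulty that $d$ is only $\mathbb{C}$-linear. The gap is in the final step. The mechanism you invoke --- an $\mathfrak{m}$-adic filtration whose spectral sequence ``degenerates because the abutment is finite-dimensional'' --- is not Brieskorn's argument and does not exist in the generality you claim. Since $\mathfrak{m}^k/\mathfrak{m}^{k+1}=\hat{\mathfrak{m}}^k/\hat{\mathfrak{m}}^{k+1}$, the analytic and formal filtered complexes have the \emph{same} associated graded, hence identical spectral sequences from $E_0$ onward; the filtration on the analytic complex is not complete, so this spectral sequence can at best be expected to converge to the cohomology of the \emph{completed} complex. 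The entire content of the lemma is the convergence/comparison on the analytic side, and finite-dimensionality of both cohomologies does not supply it. Concretely, the principle ``coherent terms $+$ $\mathbb{C}$-linear differential $+$ finite-dimensional cohomology $\Rightarrow$ formal comparison'' is false: take the two-term complex $\mathcal{O}_{\mathbb{C},0}\stackrel{D}{\longrightarrow}\mathcal{O}_{\mathbb{C},0}$ with $D=x^2\partial_x+1$ an irregular ordinary differential operator. Both the analytic and the formal cohomologies are finite dimensional, but by Malgrange's index theorem \cite{Mal} their Euler characteristics differ by the irregularity of $D$ (formally $D$ is bijective, while analytically the class of $x$ generates a nontrivial cokernel, the unique formal solution of $x^2u'+u=x$ being divergent), so completion does not induce an isomorphism.

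What actually makes the lemma true is geometric input beyond coherence and finiteness. The paper's proof follows Bloom--Brieskorn: resolve the singularity by $\pi:Y\rightarrow X$ with exceptional set $A=\pi^{-1}(0)$, compare $\tilde{\Omega}^{\bullet}_{X,0}$ with the Noether complex $R^0\pi_*\Omega^{\bullet}_Y$ via an inclusion $j$ whose cokernel is finite dimensional and concentrated at $0$ (this is where the observations you make about finite modules being already complete are genuinely used), and then invoke Grauert's comparison theorem together with Brieskorn's analysis along $A$ to identify $H^{\bullet}(H^0(A,\Omega^{\bullet}_Y|_A))$ with $H^{\bullet}(H^0(A,\hat{\Omega}^{\bullet}_Y))$. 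Alternatively, for $p=n$ only, the paper's remark derives the comparison from the regularity of the Gauss--Manin connection on the Brieskorn lattice $H''_f$. Some such regularity or resolution input is indispensable; your argument needs to be repaired by importing one of these rather than by the filtration argument as stated.
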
 
\begin{proof}
Following \cite{B} let $\pi:Y\rightarrow X$ be a resolution of singularities in the sense of Hironaka and denote by $A=\pi^{-1}(0)$ the exceptional set, which we may suppose it is given by some equations $y_1\cdots y_r=0$. Let $\Omega^{\bullet}_{Y}$ be the complex of holomorphic forms on $Y$ and let $\Omega^{\bullet}_Y|_{A}$ be its restriction on $A$. Let also 
\[\hat{\Omega}^{\bullet}_{Y}=\lim_{\underset{k}{\leftarrow}} \frac{\Omega^{\bullet}_Y}{\frak{m}^k\Omega^{\bullet}_Y}.\]
Consider now the direct image sheaf $R^0\pi_*\Omega^{\bullet}_{Y}$ (this is also called the Noether complex). Since the map $\pi$ is proper this is a coherent sheaf (by Grauert's coherence theorem), which away from the singular point $0$ it can be identified with the Givental complex: $R^0\pi_*\Omega^{\bullet}_Y|_{X^*}\cong \tilde{\Omega}^{\bullet}_{X^*}$. In particular there is an inclusion $j:\tilde{\Omega}^{\bullet}_{X}\rightarrow R^0\pi_*\Omega^{\bullet}_Y$ whose cokernel is concentrated at the singular point $0$ and it is thus finite dimensional. Consider now the formal completion of the above complexes. It gives a commutative diagram:
\begin{equation}
\begin{CD}
\tilde{\Omega}^{\bullet}_{X,0} @>j>>H^0(A, \Omega^{\bullet}_Y|_{A}) \\
 @VVV          @VVV         \\
\hat{\tilde{\Omega}}^{\bullet}_{X,0} @>\hat{j}>>H^0(A, \hat{\Omega}^{\bullet}_Y)  \\
\end{CD}
\end{equation}
where of course $H^0(A, \Omega^{\bullet}_Y|_{A}) \cong (R^0\pi_*\Omega^{\bullet}_Y)|_0$ and $\hat{j}$ is the formal completion of the inclusion $j$. Indeed, this follows from the fact (c.f. \cite{B} and the corresponding references therein):
\[H^0(A,\hat{\Omega}^{\bullet}_Y)\cong \lim_{\underset{k}{\leftarrow}}H^0(A, \frac{\Omega^{\bullet}_Y}{\frak{m}^k\Omega^{\bullet}_Y})\cong \lim_{\underset{k}{\leftarrow}}\frac{H^0(A, \Omega^{\bullet}_Y|_{A})}{\frak{m}^kH^0(A, \Omega^{\bullet}_Y|_{A})}.\]
Now, since the completion functor is exact and by the fact that the cokernel of $j$ is already complete (by finite dimensionality), it follows that 
\[\text{Coker}j\cong \text{Coker}\hat{j}.\] 
Thus, in order to show the theorem starting from the commutative diagram above, it suffices to show the isomorphism:
\[H^{\bullet}(H^0(A,\Omega^{\bullet}_{Y}|_{A}))\cong H^{\bullet}(H^0(A,\hat{\Omega}^{\bullet}_Y)).\]
This is proved in turn in \cite{B} (points (b)-(d), pp. 140-142).
\end{proof}
\begin{rem}
For the hypersurface case, there is a simple alternative proof of the above lemma, only for the $n$th-cohomology of the Givental complex, without using resolution of singularities: let $\hat{H}''_f$ be the formal completion of the Brieskorn module with respect to the $\frak{m}$-adic topology. Then, by the regularity of the Gauss-Manin connection and the properties of its analytical index \cite{Mal}, there is an isomorphism of $\mathbb{C}[[f]]$-modules\footnote{or equivalently by the Bloom-Brieskorn theorem \cite{B}, but this uses again resolution of singularities.}:
\[\hat{H}''_f\cong H''_f\otimes_{\mathbb{C}\{f\}} \mathbb{C}[[f]]\]
and thus the quotient
\[\frac{\hat{H}''_f}{f\hat{H}''_f}=\frac{\hat{\Omega}^{n+1}}{df\wedge d\hat{\Omega}^{n-1}+f\hat{\Omega}^{n+1}}\]
is again a $\mu$-dimensional vector space. The space of deformations $\mathcal{Q}_{X,0}$ of the germ $(X,0)$ is finite dimensional and thus it is already complete:
\[\mathcal{Q}_{X,0}\cong \hat{\mathcal{Q}}_{X,0}.\]
Following the construction presented in the proof of Proposition \ref{p1} for the cohomology $H^n(\tilde{\Omega}^{\bullet}_{X,0})$ we obtain again a short exact sequence:
\[0\rightarrow H^n(\hat{\tilde{\Omega}}^{\bullet}_{X,0})\stackrel{df\wedge}{\rightarrow}\frac{\hat{H}''_f}{f\hat{H}''_f}\stackrel{\pi}{\rightarrow}\hat{\mathcal{Q}}_{X,0}\rightarrow 0.\]
The proof of the isomorphism 
\begin{equation}
\label{iso}
H^n(\tilde{\Omega}^{\bullet}_{X,0})\cong H^n(\hat{\tilde{\Omega}}^{\bullet}_{X,0})
\end{equation}
follows then immediately by comparing the short exact sequence above with the analytic one (\ref{ses1}). 
\end{rem}

\section{An Interpolation Lemma for the Isotropy Group of a Hypersurface Singularity}

Let $\mathcal{R}_{X,0}$ be the isotropy group of the germ $(X,0)$, i.e. the group of germs of diffeomorphisms at the origin tangent to the identity and preserving the hypersurface $X=\{f=0\}$. It means that for every $\Phi \in \mathcal{R}_{X,0}$ there exists an invertible function germ $g \in \mathcal{O}$ such that the following hold:
\[\Phi(x)=x \hspace{0.15cm} \text{mod} \hspace{0.15cm} \frak{m}^2, \hspace{0.3cm} g(x)=1\hspace{0.15cm} \text{mod}\hspace{0.15cm} \frak{m},\]
\[\Phi^*f=gf.\]

We will need the following interpolation lemma for the group $\mathcal{R}_{X,0}$ which is a simple variant of the one presented by J. -P. Fran\c{c}oise in \cite{F} and it relies in a general method obtained by S. Sternberg in \cite{Ste}. It can be also generalised without difficulty to any germ of an analytic subset $(X,0)$ (whose singularities can be arbitrary).

\begin{lem}
\label{l1}
Any diffeomorphism $\Phi \in \mathcal{R}_{X,0}$  can be interpolated by a 1-parameter family of formal diffeomorphisms $\Phi_t \in \hat{\mathcal{R}}_{X,0}$, i.e. there exists a family of formal function germs $g_t \in \hat{\Omega}^0$ such that:
\[\Phi_0=Id, \hspace{0.3cm} \Phi_1=\Phi,\]
\[g_0=1, \hspace{0.3cm} g_1=g,\]
\[\Phi_t^*f=g_tf.\]
\end{lem}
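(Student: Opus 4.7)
The plan is to pass to the completed local ring $\hat{\mathcal{O}}=\mathbb{C}[[x_0,\ldots,x_n]]$ and embed $\Phi$, viewed as a continuous $\mathbb{C}$-algebra automorphism via pullback $\Phi^*$, into a formal one-parameter subgroup of algebra automorphisms by means of Sternberg's formal logarithm. Since $\Phi$ is tangent to the identity, the operator $\Phi^*-\mathrm{Id}$ raises the $\mathfrak{m}$-adic order by at least one, so the series
\[V \;=\; \sum_{n\ge 1}\frac{(-1)^{n-1}}{n}(\Phi^*-\mathrm{Id})^n\]
converges in the $\mathfrak{m}$-adic topology to a $\mathbb{C}$-linear endomorphism of $\hat{\mathcal{O}}$. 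I would then invoke (or reprove along the lines of \cite{Ste} and \cite{F}) the verification that $V$ is actually a derivation of $\hat{\mathcal{O}}$, hence a formal vector field vanishing to order $\ge 2$ at the origin, satisfying $\Phi^*=\exp V$. This is the step I expect to be the main technical obstacle, since the Leibniz rule for $V$ is not formal from the definition and requires the combinatorial identity that lies at the heart of Sternberg's interpolation method.

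With $V$ in hand, I would define $\Phi_t$ by $\Phi_t^*=\exp(tV)$ via the analogous $\mathfrak{m}$-adically convergent series; this yields a one-parameter family of formal algebra automorphisms tangent to the identity, with $\Phi_0=\mathrm{Id}$ and $\Phi_1=\Phi$. The next step is to show that each $\Phi_t$ preserves $X$, which is equivalent to $V$ being logarithmic along $X$, i.e.\ $V(f)\in (f)$. For this I would exploit that the hypothesis $\Phi^*f=gf$ makes $\Phi^*$ preserve the principal ideal $(f)\subset\hat{\mathcal{O}}$: for any $af\in(f)$ one has $\Phi^*(af)=\Phi^*(a)\cdot gf\in (f)$, so $\Phi^*-\mathrm{Id}$ sends $(f)$ into $(f)$, and by iteration every $(\Phi^*-\mathrm{Id})^n$ does so as well. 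Since $(f)$ is finitely generated in the Noetherian ring $\hat{\mathcal{O}}$ it is $\mathfrak{m}$-adically closed, and therefore the $\mathfrak{m}$-adic limit $V$ also satisfies $V((f))\subseteq (f)$; in particular $V(f)=hf$ for some $h\in \hat{\mathcal{O}}$.

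Finally, because a derivation preserving an ideal exponentiates to an automorphism preserving the same ideal, $\Phi_t^*$ preserves $(f)$ for all $t$, so $\Phi_t^*f=g_t f$ for a unique formal function $g_t$; it is a unit since $g_0=1$ and $g_t$ depends $\mathfrak{m}$-adically continuously on $t$, and equivalently $g_t$ is the solution of the linear ODE $\dot g_t=(\Phi_t^*h)\,g_t$ with initial condition $g_0=1$, whose value at $t=1$ must equal $g$ by $\Phi_1=\Phi$. Once Sternberg's step is granted, the remaining verifications involving the ideal $(f)$ are routine, and no new difficulty specific to an isolated hypersurface singularity arises---in line with the remark that the method generalises to any analytic subset.
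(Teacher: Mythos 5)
Your proposal is correct, and it rests on the same underlying idea as the paper's proof --- Sternberg's embedding of a formal diffeomorphism tangent to the identity into a one-parameter group --- but the execution differs at both stages, so a comparison is worthwhile. For the construction of $\Phi_t$ the paper does not take the logarithm series of $\Phi^*$; it instead solves the flow equation $\Phi_t'=\Phi_0'\circ\Phi_t$ with boundary conditions $\Phi_0=\mathrm{Id}$, $\Phi_1=\Phi$, determining the coefficients $\phi_{i,\beta}'(0)$ of the generating vector field by induction on $|\beta|$ from a triangular system. The two constructions yield the same one-parameter group; yours isolates the one genuinely nontrivial point (that $V=\log\Phi^*$ satisfies the Leibniz rule), which you correctly flag and defer to Sternberg, whereas the paper's inductive scheme absorbs this into the solvability of the boundary-value problem. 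The more substantial divergence is in proving that $\Phi_t$ preserves $X$ for all $t$: the paper notes that the coefficients of $\Phi_t$ are polynomials in $t$, chooses $g_t$ with polynomial coefficients, and argues that each graded piece of $\Phi_t^*f-g_tf$ is a polynomial in $t$ vanishing at every integer (where $\Phi_t$ is an iterate of $\Phi$ and manifestly preserves $X$), hence vanishing identically. You instead show directly that $V$ is logarithmic along $X$: since $\Phi^*$ preserves the ideal $(f)$, so does every power of $\Phi^*-\mathrm{Id}$, and the $\mathfrak{m}$-adic closedness of ideals in the complete Noetherian ring $\hat{\mathcal{O}}$ passes this to the limit $V$; exponentiating then gives $\Phi_t^*f=g_tf$ with $g_t$ the solution of $\dot g_t=(\Phi_t^*h)g_t$, $g_0=1$. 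Your ideal-theoretic argument is arguably cleaner and more robust, since it does not depend on the polynomiality of the coefficients in $t$, at the price of having to justify the derivation property of the formal logarithm, which the paper's route sidesteps; both arguments, consistently with the paper's remark, use nothing about the singularity of $X$ being isolated.
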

\begin{proof}
Denote by $(x_1,...,x_{n+1})$ the coordinates at the origin and let $x^{\beta}=x_1^{\beta_1}...x_{n+1}^{\beta_{n+1}}$, $\beta=(\beta_1,...,\beta_{n+1})\in \mathbb{N}^{n+1}$, $|\beta|=\sum_{i=1}^{n+1}\beta_i$. Let 
\[\Phi_i(x)=x_i+\sum_{j}\sum_{|\beta|=j}\phi_{i,\beta}x^{\beta}, \hspace{0.3cm} i=1,...,{n+1}\]
be the components of $\Phi$. We will find the interpolation $\Phi_t$ with components in the form:
\[\Phi_{t,i}(x)=x_i+\sum_j\sum_{|\beta|=j}\phi_{i,\beta}(t)x^{\beta}, \hspace{0.3cm} i=1,...,{n+1}\]
as solution of the differential equation:
\begin{equation}
\label{fe}
\Phi'_t=\Phi'_0\circ \Phi_t, 
\end{equation}
with boundary conditions $\Phi_0=Id$, $\Phi_1=\Phi$ (c.f. \cite{Ste}). We can can do this by induction on $j$ and we may assume that the $\phi_{i,\beta}$ are already known for $j\leq k-1$. Then, for $j=k$, equation (\ref{fe}) implies:
\[\phi'_{i,\beta}(t)=\phi'_{i,\beta}(0)+\psi_{i,\beta}(t),\]
where the functions $\psi_{i,\beta}(t)$ are known by induction and they vanish at zero. Integration then gives:
\[\phi_{i,\beta}(t)=\phi'_{i,\beta}(0)t+\int_0^t\psi_{i,\beta}(\tau)d\tau.\]
Obviously the initial condition $\phi_{i,\beta}(0)=0$ is satisfied, and it suffices to choose the $\phi'_{i,\beta}(0)$ such that the boundary condition $\phi_{i,\beta}(1)=\phi_{i,\beta}$ is satisfied as well. Now, by the fact that the family $\Phi_t$ is an interpolation of $\Phi$, we may choose an interpolation $g_t$ of $g$:
\[g_t(x)=g(0)+\sum_{|\beta|\geq 1}g_{\beta}(t)x^{\beta},\]
satisfying the required assumptions (recall that $g(0)=1$) and such that $\Phi_t^*f=g_tf$ for all integer values of $t$. In fact, the coefficients of $\Phi_t$ are polynomials in $t$, and choosing the interpolation $g_t$ with polynomial coefficients in $t$ as well (linear in $t$ for example), it follows that for any $k$ fixed, the homogeneous part in the Taylor expansion of $\Phi_t^*f-g_tf$ is a polynomial in $t$ which vanishes for all integer values of $t$. Thus, it vanishes for all real $t$ as well and this finishes the proof of the lemma.  
\end{proof}

\section{Proof of the Theorem}
We will prove here Theorem \ref{t} which can now be restated in the following form:
\begin{thm}
\label{t2}
Let $\omega$ and $\omega'$ be two germs of volume forms which are $\mathcal{R}_{X,0}$-equivalent. Then there exists an $n$-form $\alpha$ such that $\omega-\omega'=d\alpha$ and $[\alpha]=0$ in $H^n(\tilde{\Omega}^{\bullet}_{X,0})$.
\end{thm}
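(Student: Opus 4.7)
The plan follows the two-step strategy advertised in the introduction: prove the theorem first in the formal category using the interpolation Lemma \ref{l1}, then descend to the analytic setting via Lemma \ref{l2}.

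Let $\Phi \in \mathcal{R}_{X,0}$ realise the equivalence $\Phi^*\omega' = \omega$, and embed it in a formal $1$-parameter family $\Phi_t \in \hat{\mathcal{R}}_{X,0}$ with $\Phi_t^* f = g_t f$, $\Phi_0 = \mathrm{id}$, $\Phi_1 = \Phi$, as provided by Lemma \ref{l1}. The interpolation equation (\ref{fe}) shows that $\{\Phi_t\}$ is the flow of a single (time-independent) formal vector field $Y := \dot{\Phi}_0$, which satisfies $Y(f) = \dot{g}_0 f$ (differentiate $\Phi_t^* f = g_t f$ at $t=0$); in particular $Y$ is logarithmic for $f$ and tangent to $X$ at its smooth points. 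Since $\omega'$ is a top form, $d\omega' = 0$ and Cartan's formula gives $L_Y\omega' = d(\iota_Y\omega')$, so the standard homotopy computation produces
\[\omega - \omega' \;=\; \int_0^1 \frac{d}{dt}(\Phi_t^* \omega')\, dt \;=\; d\hat{\alpha}, \qquad \hat{\alpha} \;:=\; \int_0^1 \Phi_t^*(\iota_Y\omega')\, dt \in \hat{\Omega}^n;\]
the integral is well defined because the coefficients of $\Phi_t$ and $g_t$ are polynomials in $t$ by construction in Lemma \ref{l1}.

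The crucial observation is that $\hat{\alpha}$ lies in the subcomplex $\hat{\Omega}^n(X^*)$. Indeed, $df\wedge\iota_Y\omega' = Y(f)\omega' = \dot{g}_0 f\omega' \in f\hat{\Omega}^{n+1}$, so $\iota_Y\omega' \in \hat{\Omega}^n(X^*)$; pulling back this equality by $\Phi_t$ and using $\Phi_t^* f = g_t f$ with $g_t$ a unit shows $df\wedge \Phi_t^*(\iota_Y\omega') \in f\hat{\Omega}^{n+1}$ for every $t$, and the $t$-integration preserves this property. Thus $\omega - \omega'\in d\hat{\Omega}^n(X^*)$. Combining the tautological short exact sequence $0\to\hat{\Omega}^\bullet(X^*)\to\hat{\Omega}^\bullet\to\hat{\tilde{\Omega}}^\bullet_{X,0}\to 0$ with the formal Poincar\'{e} lemma $H^{>0}(\hat{\Omega}^\bullet) = 0$, the connecting map gives an isomorphism
\[H^n(\hat{\tilde{\Omega}}^\bullet_{X,0}) \;\cong\; H^{n+1}(\hat{\Omega}^\bullet(X^*)) \;=\; \hat{\Omega}^{n+1}/d\hat{\Omega}^n(X^*),\]
under which the above fact expresses precisely the vanishing of the formal Givental class of $\omega-\omega'$.

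The identical argument in the analytic category identifies $H^n(\tilde{\Omega}^\bullet_{X,0})$ with $\Omega^{n+1}/d\Omega^n(X^*)$, and Lemma \ref{l2} asserts that the natural inclusion induces an isomorphism of these cohomologies. Consequently the analytic class of $\omega-\omega'$ in $\Omega^{n+1}/d\Omega^n(X^*)$ also vanishes, yielding an analytic $\alpha \in \Omega^n(X^*)$ with $d\alpha = \omega-\omega'$. Such an $\alpha$ vanishes on $X^*$ by construction, hence represents the zero class already in $\tilde{\Omega}^n_{X,0}$, a fortiori in $H^n(\tilde{\Omega}^\bullet_{X,0})$, which is the required statement. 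The point I expect to require the most care is the naturality needed to ensure that the isomorphism of Lemma \ref{l2} intertwines the analytic and formal connecting maps of the respective short exact sequences so that the vanishing really does descend; once this naturality is granted, the remaining Moser/homotopy computation is routine.
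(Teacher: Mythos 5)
Your proposal is correct and follows essentially the same route as the paper: the interpolation Lemma \ref{l1}, the Moser-type homotopy formula producing a formal primitive $\hat{\alpha}$ of $\omega-\omega'$ lying in $\hat{\Omega}^n(X^*)$, and the comparison Lemma \ref{l2} to descend to the analytic category. The only cosmetic difference is that you transport the vanishing through the connecting isomorphism $H^n(\tilde{\Omega}^{\bullet}_{X,0})\cong \Omega^{n+1}/d\Omega^n(X^*)$ and its formal analogue (the naturality you flag is standard and unproblematic), whereas the paper compares the analytic and formal primitives directly via the formal Poincar\'e lemma and then applies the isomorphism (\ref{iso}) to the class $[\alpha]=[\hat{\alpha}]$.
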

\begin{proof}
Consider first the $n$-form $\alpha$ defined by $\omega-\omega'=d\alpha$ (Poincar\'e lemma) and let $\Phi \in \mathcal{R}_{X,0}$ be the diffeomorphism providing the equivalence: $\Phi^*\omega'=\omega$.
It follows that
\begin{equation}
\label{c1}
\omega-\Phi^*\omega=d\alpha
\end{equation}
holds in $\Omega^{n+1}$. Interpolate now $\Phi$ by the 1-parameter family of formal diffeomorphisms $\Phi_t\in \hat{\mathcal{R}}_{X,0}$ as in Lemma \ref{l1} above. We have that:
\[\omega-\Phi^*\omega=\int_0^1\frac{d}{dt}\Phi_t^*\omega dt=\int_0^1\Phi_t^*(L_{\hat{v}}\omega) dt=\]
\[=\int_0^1\Phi_t^*d(\hat{v}\lrcorner \omega)dt=d\int_0^1\Phi_t^*(\hat{v}\lrcorner \omega)dt,\]
holds in $\hat{\Omega}^{n+1}$, where $\hat{v}$ is the 1-parameter family of formal vector fields generating $\Phi_t$: $\exp{t\hat{v}}=\Phi_t$. Thus, in $\hat{\Omega}^{n+1}$ we may write:
\begin{equation}
\label{c2}
\omega-\Phi^*\omega=d\hat{\alpha},
\end{equation}
where the formal $n$-form $\hat{\alpha}$ is defined by:
\[\hat{\alpha}=\int_0^1\Phi_t^*(\hat{v}\lrcorner \omega)dt+d\hat{h},\]
for some formal $(n-1)$-form $\hat{h}$. Now, since $\Phi_t$ preserves the germ $(X,0)$ for all $t$ and $\hat{v}$ is tangent to its smooth part, it follows that $\hat{\alpha}|_{X^*}=d\hat{h}|_{X^*}$, i.e. that $[\hat{\alpha}]=0$ in $H^n(\hat{\tilde{\Omega}}^{\bullet}_{X,0})$. View now the relation (\ref{c1}) as a relation in $\hat{\Omega}^{n+1}$. By comparing it with the relation (\ref{c2}) we obtain $\alpha=\hat{\alpha}+d\hat{g}$ for some formal $(n-1)$-form $\hat{g}$ and thus $[\alpha]=[\hat{\alpha}]=0$ in $H^n(\hat{\tilde{\Omega}}^{\bullet}_{X,0})$ as well.  By the the Bloom-Brieskorn Lemma \ref{l2} and in particular by the isomorphism (\ref{iso}) we finally obtain that $[\alpha]=0$ in $H^n(\tilde{\Omega}^{\bullet}_{X,0})$ and this finishes the proof of the theorem.
\end{proof}

\section*{Acknowledgements}
The author is grateful to the staff of ICMC-USP (Instituto de Ci\^encias Matem\'aticas e de Computa\c{c}\~ao-Universidade de S\~ao Paulo) for their hospitality, where the paper was written. 

The author is also grateful to Mauricio D. Garay for his valuable comments on a previous version of the paper, as well as to Christian Sevenheck for his interest in the work and for providing directions concerning G. M. Greuel's work.  

 ``This research has been supported (in part) by EU Marie-Curie IRSES Brazilian-European partnership in Dynamical Systems (FP7-PEOPLE-2012-IRSES 318999 BREUDS)''.

\section*{Bibliography}


\begin{thebibliography}{55}     
\bibitem{B} E. Brieskorn, \textit{Die Monodromie der Isolierten Singularit\"aten von H\"yperfl\"anchen}, Manuscripta Math., 2 (1970), 103-161 
\bibitem{C1} Y. Colin de Verdiere, \textit{Singular Lagrangian manifolds and Semi-Classical Analysis}, Duke Math. Journal 116, (2011), 263-298
\bibitem{Fe} A. Ferrari, \textit{Cohomology and Holomorphic Differential Forms on Complex Analytic Spaces}, Ann. Scuola Norm. Sup. Pisa, Tome 24, No. 1, (1970), 65-77 
\bibitem{F} J. P. Francoise, \textit{Relative Cohomology and Volume Forms}, Singularities, Banach Center Publications, 20, (1988), 207-222
\bibitem{Gi} A. B. Givental, \textit{Singular Lagrangian Manifolds and their Lagrangian Mappings}, Itogi Nauki Tekh., Ser. Sovrem. Probl. Mat., Novejshie Dostizh. 33 (1988), 55-112, English translation: J. Soviet Math. 52.4 (1990), 3246-3278
\bibitem{G} G. M. Greuel, \textit{Der Gauss-Manin-Zusammenhang Isolierter Singularit\"aten von Vollst\"andigen Durchschnitten}, Math. Ann., (1975), 235-266
\bibitem{He} C. Hertling, \textit{Frobenious Manifolds and Moduli Spaces for Singularities}, Cambridge Tracts in Mathematics, 151, Cambridge University Press, (2002)
\bibitem{Mal} B. Malgrange, \textit{Int\'egrales Asymptotiques et Monodromie}, Ann. Scient. Ec. Norm. Sup., 7, (1974), 405-430 
\bibitem{Sa} K. Saito, \textit{Quasihomogene Isolierte Singularit\"aten von Hyperfl\"achen}, Invent. Math. 14 (1971), 123-142
\bibitem{S} M. Sebastiani, \textit{Preuve d'une Conjecture de Brieskorn}, Manuscripta Math., 2 (1970), 301-308
\bibitem{Ste} S. Sternberg, \textit{Infinite Dimensional Lie Groups and Formal Aspects of Dynamical Systems}, J. Math. Mec., 10, (1961), 451-474 
\bibitem{Sev} C. Sevenheck, \textit{Singularit\'es Lagrangiennes}, Phd Thesis, \'Ecole Polytechnique, (2003)
\bibitem{Var} A. N. Varchenko, \textit{Local Classification of Volume Forms in the Presence of a Hypersurface}, M. V. Lomonosov State University, Moscow. Translated from Funktsional'nyi Analiz i Ego Prilozheniya, Vol. 19, No. 4, (1985), 23-31
\end{thebibliography}
\end{document}